\documentclass[11pt,A4]{article}
\usepackage[latin5]{inputenc}
\usepackage{amsmath,amssymb}
\usepackage{amsthm}
\usepackage{indentfirst}

\newtheorem{definition}{Definition}
\newtheorem{theorem}{Theorem}

\newtheorem{corollary}{Corollary}

\newtheorem{proposition}{Proposition}

\numberwithin{definition}{section} \numberwithin{theorem}{section}
\numberwithin{lemma}{section}\numberwithin{corollary}{section}
\numberwithin{equation}{section} \numberwithin{example}{section}
\numberwithin{proposition}{section} \numberwithin{remark}{section}
\oddsidemargin  1.0cm \evensidemargin 0.0cm \topmargin 0.5cm
\headsep 0in \headheight 0in \textheight 22cm \textwidth 15cm

\begin{document}

\begin{center}

{\Large Approximation of the Image of the Hilbert-Schmidt \\ Integral Operator}

\vspace{5mm}

Nesir Huseyin

\vspace{5mm}

{\small Sivas Cumhuriyet University, Department Mathematics and Science Education \\  58140 Sivas, Turkey

\vspace{3mm}

E-mail: nhuseyin@cumhuriyet.edu.tr}

\end{center}

\vspace{5mm}

\textbf{Abstract.}
In this paper an approximation of the image of the closed ball of the space $L_p$ $(p>1)$ centered at the origin with radius $r$ under Hilbert-Schmidt integral operator $F(\cdot):L_p\rightarrow L_q$ $\displaystyle \left(\frac{1}{p}+\frac{1}{q}=1\right)$  is presented. An error estimation for given approximation is obtained.

\vspace{3mm}
\textbf{Keywords:} Hilbert-Schmidt integral operator,  image of the operator, input-output system, approximation, error estimation

\vspace{3mm}
\textbf{2020 Mathematics Subject Classification:}  47G10, 47B38, 65R10, 93C35

\section{Introduction}

Integral operators arise in various problems of theory and applications and is one of the important tools to investigate different type problems of mathematics. For example, integral operators are used in  Fredholm, Volterra, Urysohn-Hammerstein and etc. type integral equations and play crucial role for definition of solution concepts for different type initial and boundary value problems of differential equations (see, e.g.  \cite{cor}, \cite{kras}). It is necessary to underline that the theory of linear integral equations are considered one of the origins of the contemporary functional analysis (\cite{goh}, \cite{hil}, \cite{rie}). In particular, the integral operators are used for description the behaviour of some input-output systems (see, \cite{hus}, \cite{pol} and references therein).

In present paper an approximation of the image of the closed ball of the space $L_p$ $(p>1)$ centered at the origin under Hilbert-Schmidt integral operator is considered. Presented approximation method allows for every $\varepsilon >0$ to construct a finite $\varepsilon$-net on the image of the closed ball which consists of the images of a finite number of piecewise constant functions. Approximation of the image of given closed ball can be used in infinite dimensional optimization problems for predetermining the desirable inputs for input-output system described by Hilbert-Schmidt integral operator. Note that integrally constrained inputs are usually applied when the input resources of the system are exhausted by consumption such as energy, fuel, finance, etc. (see, \cite{hus}, \cite{hus1}, \cite{kra} and references therein). An evaluation of error estimation for Hausdorff distance between the image and its approximation, which consists of a finite number of functions, is given.

The paper is organized as follows. In Section 2 the conditions and auxiliary propositions which are used in following arguments, are formulated. In Section 3 the image of the integral operator is approximated by the set, consisting of a finite number of functions. An evaluation of error estimation depending on the approximation parameters is given (Theorem \ref{teo1}).

\section{Preliminaries}

Consider the Hilbert-Schmidt integral operator
\begin{eqnarray}\label{fo}
F(x(\cdot))| (\xi)= \int_{\Omega} K(\xi,s)x(s) ds \ \ \mbox{for almost all} \ \ \xi \in \Omega,
\end{eqnarray}
where $x(s)\in \mathbb{R}^n$, $K(\xi,s)$ is $m\times n$ dimensional matrix function,  $(\xi,s) \in \Omega\times \Omega,$ $\Omega \subset \mathbb{R}^k$ is a compact set.

For given $p>1$ and $r>0$ we denote
\begin{eqnarray*}\label{con}
B_{p}(r)=\left\{ x(\cdot)\in L_p \left( \Omega ;\mathbb{R}^n \right): \left\|x(\cdot)\right\|_p \leq r\right\},
\end{eqnarray*} where $L_p\left(\Omega;\mathbb{R}^n \right)$ is the space of Lebesgue measurable functions $x(\cdot):\Omega\rightarrow \mathbb{R}^n$ such that $\left\|x(\cdot)\right\|_p <+\infty$, $\displaystyle \left\|x(\cdot)\right\|_p =\left( \int_{\Omega} \left\|x(s)\right\|^p ds \right)^{\frac{1}{p}}$, $\left\| \cdot \right\|$ denotes the Euclidean norm.

It is assumed that the matrix function $K(\cdot, \cdot):\Omega \times \Omega \rightarrow \mathbb{R}^{m\times n}$ is Lebesgue measurable and
\begin{eqnarray*}\label{k*}
\displaystyle \int_{\Omega}\int_{\Omega} \left\|K(\xi,s)\right\|^q d\xi ds < +\infty,
\end{eqnarray*} where $\displaystyle \frac{1}{q} +\frac{1}{p}=1$. Denote
\begin{eqnarray}\label{eq1}
\mathcal{F}_p(r)= \left\{ F(x(\cdot))|(\cdot): x(\cdot)\in B_p(r)\right\}.
\end{eqnarray}

It is obvious that the set $\mathcal{F}_p(r)$ is the image of the set $B_{p}(r)$ under Hilbert-Schmidt integral operator (\ref{fo}). Since operator $F(\cdot)$ is linear and compact one, then we have that the set $\mathcal{F}_p(r)$ is a convex and compact subset of the space $L_q\left(\Omega;\mathbb{R}^m \right)$.

Since the set of continuous functions $\Phi(\cdot,\cdot):\Omega \times \Omega \rightarrow \mathbb{R}^{m\times n}$ is dense in the space $L_q\left(\Omega \times \Omega; \mathbb{R}^{m\times n}\right)$ (see, e.g. \cite{kan}, p.318), then for every $\lambda >0$  there exists a continuous function $K_{\lambda}(\cdot, \cdot):\Omega \times \Omega \rightarrow \mathbb{R}^{m\times n}$ such that
\begin{eqnarray}\label{eq1a}
\left(\int_{\Omega}\int_{\Omega} \left\|K(\xi,s)-K_{\lambda}(\xi,s)\right\|^q d\xi ds\right)^{\frac{1}{q}} \leq \frac{\lambda}{2r} \, .
\end{eqnarray} Denote
\begin{eqnarray}\label{eq2a}
M(\lambda)=\max \left\{\left\|K_{\lambda}(\xi,s)\right\|: (\xi,s)\in \Omega \times \Omega\right\},
\end{eqnarray}
\begin{eqnarray}\label{eq3a}
\omega_{\lambda}(\Delta)&=&\max \big\{\left\|K_{\lambda}(\xi,s_2)-K_{\lambda}(\xi,s_1)\right\|: (\xi,s_2)\in \Omega \times \Omega, \nonumber \\ & \ \ \ & (\xi,s_1)\in \Omega \times \Omega, \ \left\|s_2-s_1\right\|\leq \Delta\big\},
\end{eqnarray} where $\Delta>0$ is a given number. Compactness of the set $\Omega \subset \mathbb{R}^k$ and continuity of the function $K_{\lambda}(\cdot, \cdot):\Omega \times \Omega \rightarrow \mathbb{R}^{m\times n}$ imply that for each fixed $\lambda >0$ we have $\omega_{\lambda} (\Delta) \rightarrow 0$ as $\Delta \rightarrow 0^+$  and $\omega_{\lambda} (\Delta_1) \leq \omega_{\lambda} (\Delta_2)$ if $\Delta_1 <\Delta_2.$

\vspace{2mm}

Let us define a finite $\Delta$-partition of the given compact set $\Omega \subset \mathbb{R}^k$ which will be used in following arguments.
\begin{definition}\label{def2.1} Let $\Delta >0$ and $E\subset \mathbb{R}^k.$ A finite family of sets $\Lambda=\left\{E_1,E_2, \ldots, E_l\right\}$ is called a finite $\Delta$-partition of the set $E$, if
\item{1.} $E_i \subset E$ and $E_i$ is Lebesgue measurable for every $i=1,2,\ldots ,l$;
\item{2.} $E_i\bigcap E_j =\emptyset$ for every $i\neq j$, where $i=1,2,\ldots, l$ and $j=1,2,\ldots, l$;
\item{3.} $E =\bigcup_{i=1}^{l} E_i$;
\item{4.} $diam \, (E_i) \leq \Delta$ for every $i=1,2,\ldots, l,$ where $diam \, (E_i)=\sup\big\{\left\|x-y\right\|:$ $x\in E_i, \ y\in E_i \big\}.$
\end{definition}
\begin{proposition} \label{prop2.1} Let $\Omega \subset \mathbb{R}^k$ be a compact set. Then for every $\Delta >0$ it has a finite $\Delta$-partition $\Lambda=\left\{\Omega_1, \Omega_2, \ldots, \Omega_N\right\}.$
\end{proposition}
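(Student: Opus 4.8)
The plan is to cover $\Omega$ by finitely many small half-open boxes and then intersect each of them with $\Omega$. First I would invoke compactness: since $\Omega \subset \mathbb{R}^k$ is compact it is bounded, so there exists $R>0$ with $\Omega \subset [-R,R]^k$. The idea is to cut the ambient box into a grid of small boxes whose diameters do not exceed $\Delta$, and, in order to guarantee the disjointness required in condition~2 of Definition~\ref{def2.1}, to use \emph{half-open} boxes that tile $\mathbb{R}^k$ without overlap.

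Concretely, I would fix a side length $a$ with $0 < a \leq \Delta/\sqrt{k}$, so that any $k$-dimensional cube of side $a$ has diameter $a\sqrt{k} \leq \Delta$. For each multi-index $\mathbf{j}=(j_1,\ldots,j_k)\in \mathbb{Z}^k$ I would set
\[
Q_{\mathbf{j}} = \prod_{i=1}^{k} \big[\, j_i a,\ (j_i+1)a \,\big).
\]
The family $\{Q_{\mathbf{j}}\}_{\mathbf{j}\in\mathbb{Z}^k}$ consists of pairwise disjoint, Borel (hence Lebesgue measurable) sets whose union is all of $\mathbb{R}^k$, and each $Q_{\mathbf{j}}$ has diameter at most $a\sqrt{k}\leq \Delta$. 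Since $\Omega$ is bounded, only finitely many of these boxes meet $\Omega$; I would list them as $Q_{\mathbf{j}_1},\ldots,Q_{\mathbf{j}_M}$.

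Next I would define $E_i = Q_{\mathbf{j}_i}\cap \Omega$ for $i=1,\ldots,M$. Each $E_i$ is Lebesgue measurable as the intersection of a Borel set with the compact (hence measurable) set $\Omega$, so condition~1 holds; the sets $E_i$ are pairwise disjoint because the $Q_{\mathbf{j}_i}$ are, giving condition~2; their union is $\Omega$ because the boxes cover $\mathbb{R}^k$, giving condition~3; and $\mathrm{diam}(E_i)\leq \mathrm{diam}(Q_{\mathbf{j}_i})\leq \Delta$, giving condition~4. Discarding the empty sets and relabelling the nonempty ones as $\Omega_1,\ldots,\Omega_N$ would then yield the desired finite $\Delta$-partition.

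I do not expect any serious obstacle here; the argument is essentially elementary. The only two points requiring care are the disjointness requirement, which forces the use of half-open rather than closed boxes (closed boxes would overlap along their common faces), and the elementary observation that the diameter of a $k$-cube of side $a$ equals its diagonal $a\sqrt{k}$, so the side must be chosen as $\Delta/\sqrt{k}$ rather than $\Delta$.
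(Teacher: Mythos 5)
Your argument is correct and complete: the half-open grid $\prod_{i=1}^{k}[j_i a,(j_i+1)a)$ with $a\leq \Delta/\sqrt{k}$ gives pairwise disjoint measurable pieces of diameter at most $\Delta$ whose intersections with $\Omega$ satisfy all four conditions of Definition~\ref{def2.1}. Note that the paper states Proposition~\ref{prop2.1} without any proof at all, so there is nothing to compare against; your construction is the standard one and supplies exactly the missing argument, with the two delicate points (half-open boxes for disjointness, side length scaled by $1/\sqrt{k}$ for the diameter bound) handled correctly.
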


\section{Approximation}

Let $\gamma>0$,   $\Lambda= \left\{\Omega_1,\Omega_2,\ldots, \Omega_N \right\}$ be a finite $\Delta$-partition of the compact set $\Omega\subset \mathbb{R}^k,$ $\Lambda_* =\left\{0=z_0, z_1, \ldots , z_a=\gamma \right\}$ be a uniform partition of the closed interval $[0,\gamma],$ $\delta=z_{j+1}-z_j,$ $j=0,1,\ldots, a-1,$ be a diameter of the partition $\Lambda_*,$ $\sigma >0$ be a given number, $E=\left\{x\in \mathbb{R}^n: \left\|x\right\| =1\right\}$ and $E_{\sigma}=\left\{e_1,e_2, \ldots, e_c\right\}$  be a finite $\sigma$-net on $E$. Denote
\begin{eqnarray}\label{fin} \displaystyle && B_{p}^{\gamma,\Delta,\delta,\sigma}(r)=\Big\{ x(\cdot):\Omega \rightarrow \mathbb{R}^n :  x(\xi)=z_{j_i}e_{l_i} \ \mbox{for every} \
\xi \in \Omega_i, \  \mbox{where} \nonumber \\ && \qquad \displaystyle z_{j_i} \in \Lambda_*, \ e_{l_i} \in E_{\sigma}, \
i=1,2,\ldots,  N, \  \sum_{i=1}^{N}\mu(\Omega_i) z_{j_i}^p \leq r^p  \Big\},
\end{eqnarray}
\begin{eqnarray}\label{eq801}
\mathcal{F}_{p}^{\gamma, \Delta, \delta, \sigma}(r)= \left\{ F(x(\cdot))|(\cdot): x(\cdot)\in B_{p}^{\gamma, \Delta, \delta, \sigma}(r)\right\},
\end{eqnarray}
where $\mu(\cdot)$ means the Lebesgue measure of a set. It is obvious that the set $\mathcal{F}_{p}^{\gamma, \Delta, \delta, \sigma}(r)$ consists of a finite number functions. We set
\begin{eqnarray}\label{c*}
c_*= 2r^p \left[\mu (\Omega)\right]^{\frac{1}{q}} ,
\end{eqnarray}
\begin{eqnarray}\label{psi}
\psi_{\lambda}(\Delta)=  2r \left[\mu (\Omega)\right]^{\frac{2}{q}} \omega_{\lambda}(\Delta),
\end{eqnarray}
\begin{eqnarray}\label{phi}
\varphi_{\lambda}(\delta)= M(\lambda)\left[\mu (\Omega)\right]^{1+\frac{1}{q}}\delta,
\end{eqnarray}
\begin{eqnarray}\label{alfa}
\alpha_{\lambda}(\gamma, \sigma)=M(\lambda)\left[\mu(\Omega)\right]^{1+\frac{1}{q}}  \gamma \sigma
\end{eqnarray} where $M(\lambda)$ is defined by (\ref{eq2a}).

The Hausdorff distance between the sets $U \subset L_q\left(\Omega;\mathbb{R}^m \right)$ and $V \subset L_q\left(\Omega;\mathbb{R}^m \right)$ is denoted by $h_{q}(U,V)$.
\begin{theorem} \label{teo1} For every $\lambda >0,$ $\gamma>0,$ $\Delta$-partition of the compact set $\Omega\subset \mathbb{R}^k,$ $\delta$-partition of the closed interval $[0,\gamma]$ and $\sigma >0$ the inequality
\begin{eqnarray*}
h_{q}\left(\mathcal{F}_{p}(r),\mathcal{F}_{p}^{\gamma, \Delta, \delta, \sigma}(r)\right) \leq \lambda+ \frac{c_* M(\lambda)}{\gamma^{p-1}}+ \psi_{\lambda} (\Delta) +\varphi_{\lambda}(\delta) +\alpha_{\lambda}(\gamma, \sigma)
\end{eqnarray*}
is satisfied, where the sets $\mathcal{F}_{p}(r)$ and $\mathcal{F}_{p}^{\gamma, \Delta, \delta, \sigma}(r)$ are defined by (\ref{eq1}) and (\ref{eq801}) respectively.
\end{theorem}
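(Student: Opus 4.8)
The plan is to first observe that $B_p^{\gamma,\Delta,\delta,\sigma}(r)\subseteq B_p(r)$ --- indeed any $y(\cdot)$ in the former has $\|y(\cdot)\|_p^p=\sum_{i=1}^N\mu(\Omega_i)z_{j_i}^p\le r^p$ since $\|e_{l_i}\|=1$ --- and therefore $\mathcal{F}_p^{\gamma,\Delta,\delta,\sigma}(r)\subseteq\mathcal{F}_p(r)$. Consequently one of the two one-sided deviations defining $h_q$ is zero, and it remains to prove that for each $x(\cdot)\in B_p(r)$ there is a piecewise constant $y(\cdot)\in B_p^{\gamma,\Delta,\delta,\sigma}(r)$ with $\|F(x(\cdot))|(\cdot)-F(y(\cdot))|(\cdot)\|_q$ below the asserted bound.

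Given such an $x(\cdot)$, I would construct $y(\cdot)$ in three moves. (i) Truncate: put $x_\gamma(s)=x(s)$ where $\|x(s)\|\le\gamma$ and $x_\gamma(s)=0$ otherwise, so that $\|x_\gamma(s)\|\le\gamma$ a.e.\ and $\|x_\gamma(\cdot)\|_p\le r$. (ii) Average over cells: fix $s_i\in\Omega_i$ and set $\bar x_i=\frac{1}{\mu(\Omega_i)}\int_{\Omega_i}x_\gamma(s)\,ds$; then $\|\bar x_i\|\le\gamma$ and, by Jensen's inequality applied to $t\mapsto t^p$, $\mu(\Omega_i)\|\bar x_i\|^p\le\int_{\Omega_i}\|x_\gamma(s)\|^p\,ds$. (iii) Quantize: writing $\bar x_i=\rho_i\theta_i$ with $\rho_i=\|\bar x_i\|\in[0,\gamma]$ and $\theta_i\in E$, choose $z_{j_i}\in\Lambda_*$ to be the largest grid point not exceeding $\rho_i$ (so $0\le\rho_i-z_{j_i}\le\delta$) and $e_{l_i}\in E_\sigma$ with $\|\theta_i-e_{l_i}\|\le\sigma$, and define $y(\xi)=z_{j_i}e_{l_i}$ for $\xi\in\Omega_i$. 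Summing the inequality from (ii) over $i$ and using the downward rounding gives $\sum_i\mu(\Omega_i)z_{j_i}^p\le\sum_i\mu(\Omega_i)\rho_i^p\le\|x_\gamma(\cdot)\|_p^p\le r^p$, so $y(\cdot)\in B_p^{\gamma,\Delta,\delta,\sigma}(r)$.

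To bound the error I would introduce the smoothed operator $F_\lambda(u)|(\xi)=\int_\Omega K_\lambda(\xi,s)u(s)\,ds$ and telescope $\|F(x)-F(y)\|_q\le\|F(x)-F_\lambda(x)\|_q+\|F_\lambda(x)-F_\lambda(x_\gamma)\|_q+\|F_\lambda(x_\gamma)-F_\lambda(y)\|_q+\|F_\lambda(y)-F(y)\|_q$. For the two outer terms, Hölder's inequality in $s$ (pointwise in $\xi$) followed by integration in $\xi$ gives $\|F(u)-F_\lambda(u)\|_q\le\|u(\cdot)\|_p\left(\int_\Omega\int_\Omega\|K-K_\lambda\|^q\,d\xi\,ds\right)^{1/q}\le\lambda/2$ for every $u\in B_p(r)$ by (\ref{eq1a}), accounting for the $\lambda$ term. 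For the truncation term, linearity gives $F_\lambda(x)-F_\lambda(x_\gamma)=F_\lambda(x-x_\gamma)$, and the elementary estimate $\|F_\lambda(u)\|_q\le M(\lambda)[\mu(\Omega)]^{1/q}\|u(\cdot)\|_1$ together with $\|x-x_\gamma\|_1\le\gamma^{-(p-1)}\|x\|_p^p\le r^p\gamma^{-(p-1)}$ (valid since $\|x(s)\|\le\|x(s)\|^p\gamma^{-(p-1)}$ on $\{\|x\|>\gamma\}$) bounds it by $r^p M(\lambda)[\mu(\Omega)]^{1/q}\gamma^{-(p-1)}\le c_*M(\lambda)\gamma^{-(p-1)}$.

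For the central term I would treat each cell separately, writing $\int_{\Omega_i}K_\lambda(\xi,s)[x_\gamma(s)-y(s)]\,ds=\int_{\Omega_i}[K_\lambda(\xi,s)-K_\lambda(\xi,s_i)][x_\gamma(s)-y(s)]\,ds+K_\lambda(\xi,s_i)\mu(\Omega_i)[\bar x_i-z_{j_i}e_{l_i}]$, using $\int_{\Omega_i}y(s)\,ds=\mu(\Omega_i)z_{j_i}e_{l_i}$. Because $\|s-s_i\|\le\mathrm{diam}(\Omega_i)\le\Delta$, the first summand is controlled by $\omega_\lambda(\Delta)$ via (\ref{eq3a}); summing over $i$, bounding $\|x_\gamma\|_1+\|y\|_1\le2r[\mu(\Omega)]^{1/q}$, and taking the $L_q$-norm produces exactly $\psi_\lambda(\Delta)$. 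The second summand is at most $M(\lambda)\mu(\Omega_i)\|\bar x_i-z_{j_i}e_{l_i}\|$, and the split $\|\rho_i\theta_i-z_{j_i}e_{l_i}\|\le|\rho_i-z_{j_i}|+z_{j_i}\|\theta_i-e_{l_i}\|\le\delta+\gamma\sigma$ separates its contribution, after summation over $i$ and multiplication by $[\mu(\Omega)]^{1/q}$, into $\varphi_\lambda(\delta)$ and $\alpha_\lambda(\gamma,\sigma)$. Adding the five pieces yields the stated inequality. I expect the main obstacle to be step (iii): the quantized $y$ must simultaneously stay feasible, i.e.\ satisfy $\sum_i\mu(\Omega_i)z_{j_i}^p\le r^p$, and stay $O(\delta)$-close to $\bar x_i$ in magnitude. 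Nearest-point rounding would break feasibility, so the rounding has to be downward, and it is precisely Jensen's inequality $\mu(\Omega_i)\|\bar x_i\|^p\le\int_{\Omega_i}\|x_\gamma\|^p$ that reconciles the two requirements; the preliminary truncation at level $\gamma$ is what guarantees $\rho_i\le\gamma$ so that such a downward rounding is available on the fixed grid $[0,\gamma]$.
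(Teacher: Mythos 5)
Your proof is correct, and although it uses the same five approximation mechanisms as the paper (replacing $K$ by a continuous kernel $K_{\lambda}$ via (\ref{eq1a}), truncating at level $\gamma$, averaging over the cells of the $\Delta$-partition, and rounding magnitude to $\Lambda_*$ and direction to $E_{\sigma}$), it organizes them genuinely differently. The paper builds a chain of intermediate image sets $\mathcal{F}_{p}^{\lambda}(r), \mathcal{F}_{p}^{\lambda,\gamma}(r), \mathcal{F}_{p}^{\lambda,\gamma,\Delta}(r),\ldots$ and bounds the Hausdorff distance between consecutive ones (each step needing both inclusions), then sums by the triangle inequality for $h_{q}$. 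You instead observe that $\mathcal{F}_{p}^{\gamma,\Delta,\delta,\sigma}(r)\subset\mathcal{F}_{p}(r)$ exactly, so only the one-sided deviation matters; you then build the approximant $y(\cdot)$ from $x(\cdot)$ in a single pass and telescope the operator values. The most substantive difference is your treatment of the central term: the identity $\int_{\Omega_i}y(s)\,ds=\mu(\Omega_i)z_{j_i}e_{l_i}$ lets you split $F_{\lambda}(x_{\gamma})-F_{\lambda}(y)$ on each cell into an oscillation part controlled by $\omega_{\lambda}(\Delta)$ (yielding $\psi_{\lambda}(\Delta)$) and a cell-mean part $K_{\lambda}(\xi,s_i)\mu(\Omega_i)[\bar x_i-z_{j_i}e_{l_i}]$ whose norm is at most $M(\lambda)\mu(\Omega_i)(\delta+\gamma\sigma)$ (yielding $\varphi_{\lambda}(\delta)+\alpha_{\lambda}(\gamma,\sigma)$), so the three piecewise-constant intermediate functions of the paper's Steps 3--5 collapse into one construction. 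This buys a cleaner one-sided argument, and incidentally a slightly sharper truncation term ($\tfrac12 c_*M(\lambda)\gamma^{1-p}$, since your $L_1$ estimate $\|x-x_{\gamma}\|_1\le r^p\gamma^{1-p}$ replaces the paper's Tchebyshev bound on $\mu(W)$); the paper's chain, in exchange, isolates each error source as a self-contained Hausdorff estimate, which is the form reused in Corollary \ref{cor1}. Two trivial points to patch: when $\rho_i=\|\bar x_i\|=0$ the direction $\theta_i$ is undefined, so set $y\equiv 0$ on that cell (as the paper does in its analogous case); and your downward rounding requires $\rho_i\le\gamma$ so that a grid point $z_{j_i}\le\rho_i$ of $\Lambda_*$ exists, which your truncation and the bound $\|\bar x_i\|\le\gamma$ indeed guarantee.
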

\begin{proof} The proof of the theorem will be completed in 7 steps.

\emph{Step 1.}
 Denote
\begin{eqnarray}\label{fo*}
F_{\lambda}(x(\cdot))| (\xi)= \int_{\Omega} K_{\lambda}(\xi,s)x(s) ds  \ \ \mbox{for every} \ \ \xi \in \Omega,
\end{eqnarray} and
\begin{eqnarray*}
\mathcal{F}_{p}^{\lambda}(r)= \left\{ F_{\lambda}(x(\cdot))|(\cdot): x(\cdot)\in B_p(r)\right\}.
\end{eqnarray*} where $K_{\lambda}(\cdot,\cdot)$ is defined in (\ref{eq1a}).
The set $\mathcal{F}_{p}^{\lambda}(r)$ is the image of the set $B_{p}(r)$ under Hilbert-Schmidt integral operator (\ref{fo*}) and compactness of the operator $F_{\lambda}(\cdot)$ implies that  the set $\mathcal{F}_{p}^{\lambda}(r)$ is a compact subset of the space $C\left(\Omega;\mathbb{R}^m \right)$, where $C\left(\Omega;\mathbb{R}^m \right)$ is the space of continuous functions $x(\cdot):\Omega \rightarrow \mathbb{R}^m$ with norm
$\left\| x(\cdot)\right\|_C=\max \left\{ \left\|x(\xi)\right\|: \xi \in \Omega\right\}.$

Applying (\ref{eq1a}) and H\"{o}lder's inequality it is not difficult to show that
\begin{eqnarray}\label{eq1t}
h_{q}( \mathcal{F}_p(r),\mathcal{F}_{p}^{\lambda}(r)) \leq \frac{\lambda}{2} \, .
\end{eqnarray}

\vspace{3mm}

\emph{Step 2.} Denote
 \begin{eqnarray*}
\displaystyle B_{p}^{\gamma}(r)=\left\{ x(\cdot) \in B_{p}(r): \left\|x(\xi) \right\| \leq \gamma \ \mbox{for every} \ \xi \in \Omega\right\}
\end{eqnarray*} and let
\begin{eqnarray*}
\mathcal{F}_{p}^{\lambda, \gamma}(r)= \left\{ F_{\lambda}(x(\cdot))|(\cdot): x(\cdot)\in B_{p}^{\gamma}(r)\right\}.
\end{eqnarray*}

Let $y_*(\cdot)\in \mathcal{F}_{p}^{\lambda}(r)$ be an arbitrary chosen function which is the image of $x_*(\cdot)\in B_{p}(r)$ under operator (\ref{fo*}). Define the function $x_0(\cdot):\Omega\rightarrow \mathbb{R}^n$ setting
\begin{eqnarray*} \label{eq806}
x_{0}(s)=\left\{
 \begin{array}{llll}
    x_*(s) \ ,    & \mbox{if} \ \left\Vert x_*(s) \right\Vert \leq \gamma, \\
   \displaystyle \gamma \frac{x_*(s)}{\left\Vert x_*(s) \right\Vert} \ , & \mbox{if} \ \left\Vert x_*(s) \right\Vert > \gamma
 \end{array}
 \right.
\end{eqnarray*}
where $s \in \Omega$. It  is not difficult to verify that  $x_{0}(\cdot) \in B_{p}^{\gamma}(r).$ Let $y_0(\cdot)\in \mathcal{F}_{p}^{\lambda,\gamma}(r)$ be the image of $x_{0}(\cdot) \in B_{p}^{\gamma}(r)$ under operator (\ref{fo*}). Denote $W=\left\{s \in \Omega: \left\Vert x_*(s) \right\Vert > \gamma \right\}.$
From inclusion $x_*(\cdot)\in B_{p}(r)$ and Tchebyshev's inequality (see, \cite{whe}, p.82) it follows that
\begin{eqnarray}\label{eq807*}
\mu(W)  \leq \frac{r^p}{\gamma^p}.
\end{eqnarray}

Thus, from (\ref{c*}), (\ref{eq807*}) and H\"{o}lder's inequality we obtain that
 \begin{eqnarray*}
\displaystyle \left\|y_*(\xi) -y_0(\xi) \right\| &\leq &  \int_{W}  \left\|K_{\lambda} (\xi,s)\right\| \left\|x_*(s)-x_0(s)\right\| ds \nonumber \\ & \leq & 2rM(\lambda)[\mu(W)]^{\frac{1}{q}} \leq \frac{2r^p M(\lambda)}{\gamma^{p-1}}
\end{eqnarray*} for every $\xi \in \Omega$ and consequently
\begin{eqnarray*}  \displaystyle
\left\| y_*(\cdot) -y_0(\cdot)\right\|_{q} \leq \frac{2r^p M(\lambda)}{\gamma^{p-1}}\left[\mu (\Omega)\right]^{\frac{1}{q}} =  \frac{c_* M(\lambda)}{\gamma^{p-1}},
\end{eqnarray*}
where $M(\lambda)$ is defined by (\ref{eq2a})
Since $y_*(\cdot)\in \mathcal{F}_{p}^{\lambda}(r)$ is arbitrarily chosen, we obtain from the last inequality that
\begin{eqnarray}\label{eq809}
\displaystyle \mathcal{F}_{p}^{\lambda}(r) \subset \mathcal{F}_{p}^{\lambda, \gamma}(r) +\frac{c_* M(\lambda)}{\gamma^{p-1}}B_q(1).
\end{eqnarray}

The inclusion $\mathcal{F}_{p}^{\lambda, \gamma}(r) \subset \mathcal{F}_{p}^{\lambda}(r)$ and (\ref{eq809}) yield that
\begin{eqnarray}\label{eq810}
h_{q} \left(\mathcal{F}_{p}^{\lambda}(r),  \mathcal{F}_{p}^{\lambda, \gamma}(r)\right) \leq \frac{c_* M(\lambda)}{\gamma^{p-1}}.
\end{eqnarray}

\vspace{3mm}

\emph{Step 3.} For given $\Delta >0$ and finite $\Delta$-partition $\Lambda= \left\{\Omega_1,\Omega_2,\ldots, \Omega_N \right\}$ of the compact set $\Omega\subset \mathbb{R}^k$ we denote
 \begin{eqnarray}\label{eq810*}
\displaystyle B_{p}^{\gamma, \Delta}(r)&=&\big\{ x(\cdot) \in B_{p}^{\gamma}(r): x(\xi) =x_i \nonumber \\ && \mbox{for every} \ \xi \in \Omega_i, \ i=1,2,\ldots ,N\big\}
\end{eqnarray} and
\begin{eqnarray*}
\mathcal{F}_{p}^{\lambda, \gamma, \Delta}(r)= \left\{ F_{\lambda}(x(\cdot))|(\cdot): x(\cdot)\in B_{p}^{\gamma,\Delta}(r)\right\}.
\end{eqnarray*}

Choose an arbitrary $\tilde{y}(\cdot)\in \mathcal{F}_{p}^{\lambda,\gamma}(r)$ which is the image of $\tilde{x}(\cdot)\in B_{p}^{\gamma}(r)$ under operator (\ref{fo*}). Define the function $\tilde{x}_*(\cdot):\Omega\rightarrow \mathbb{R}^n$ setting
\begin{eqnarray}\label{eq813}
\displaystyle \tilde{x}_*(\xi) =\frac{1}{\mu(\Omega_i)}\int_{\Omega_i} \tilde{x}(s)ds, \ \xi \in \Omega_i,  \ i=1,2,\ldots , N.
\end{eqnarray}

The inclusion $\tilde{x}(\cdot)\in B_{p}^{\gamma}(r)$ gives us that $\left\|\tilde{x}(s)\right\| \leq \gamma$ for every $s \in \Omega.$ Then from (\ref{eq813}) it follows that
\begin{eqnarray}\label{eq814}
\left\|\tilde{x}_*(\xi)\right\| \leq \gamma \ \mbox{for every} \ \xi \in \Omega.
\end{eqnarray}

 (\ref{eq813}) and H\"{o}lder's inequality yield that
\begin{eqnarray*}
\displaystyle \left\|\tilde{x}_*(\xi)\right\| \leq \frac{1}{\left[\mu(\Omega_i)\right]^{\frac{1}{p}}}\left(\int_{\Omega_i} \left\|\tilde{x}(s)\right\|^p ds\right)^{\frac{1}{p}}, \ \xi \in \Omega_i,  \ i=1,2,\ldots , N
\end{eqnarray*}
and consequently
\begin{eqnarray*}
\displaystyle \int_{\Omega_i} \left\|\tilde{x}_*(s)\right\|^pds \leq \int_{\Omega_i} \left\|\tilde{x}(s)\right\|^pds
\end{eqnarray*}
for every $ i=1,2,\ldots , N.$ Since $\left\|\tilde{x}(\cdot)\right\|_p \leq r$, then it follows from last inequality that
\begin{eqnarray}\label{eq815}
\displaystyle \int_{\Omega} \left\|\tilde{x}_*(s)\right\|^pds &=&\sum_{i=1}^{N} \int_{\Omega_i} \left\|\tilde{x}_*(s)\right\|^pds \leq  \sum_{i=1}^{N} \int_{\Omega_i} \left\|\tilde{x}(s)\right\|^pds \nonumber \\ &=& \int_{\Omega} \left\|\tilde{x}(s)\right\|^pds \leq r^p.
\end{eqnarray}

From (\ref{eq810*}), (\ref{eq813}), (\ref{eq814}) and (\ref{eq815}) we obtain  that  $\tilde{x}_*(\cdot)\in B_{p}^{\gamma,\Delta}(r).$ Let
$\tilde{y}_*(\cdot)\in \mathcal{F}_{p}^{\lambda, \gamma,\Delta}(r)$ be the image of $\tilde{x}_*(\cdot)$ under operator (\ref{fo*}). We have
\begin{eqnarray}\label{eq817}
\displaystyle \left\|\tilde{y}_*(\xi)-\tilde{y}(\xi)\right\| & = &
\left\| \int_{\Omega} K_{\lambda}(\xi,s)\left[\tilde{x}_*(s)-\tilde{x}(s)\right]ds \right\| \nonumber \\
&= & \left\| \sum_{i=1}^{N}\int_{\Omega_i} K_{\lambda}(\xi,s)\left[\tilde{x}_*(s)-\tilde{x}(s)\right]ds \right\|
\end{eqnarray} for every $\xi \in \Omega.$
(\ref{eq813}) yields that
\begin{eqnarray}\label{eq816*}
\displaystyle \int_{\Omega_i} \tilde{x}_*(s) ds = \int_{\Omega_i} \tilde{x}(s)ds
\end{eqnarray} for every $i=1,2,\ldots, N.$ Let $\xi \in \Omega$ and $i=1,2,\ldots,N$ be fixed. Now let us choose an arbitrary $s_i \in \Omega_i.$  From (\ref{eq816*}) it follows that
\begin{eqnarray}\label{eq817*}
 \displaystyle && \left\| \int_{\Omega_i} K_{\lambda}(\xi,s)\left[\tilde{x}_*(s))-\tilde{x}(s)\right]ds \right\| \nonumber \\ && \displaystyle \qquad =\left\|\int_{\Omega_i}\left[K_{\lambda}(\xi,s)- K_{\lambda}(\xi,s_i)\right]\left[\tilde{x}_*(s)-\tilde{x}(s)\right]ds \right\| \nonumber \\ && \displaystyle \qquad \leq \int_{\Omega_i}\left\|K_{\lambda}(\xi,s)- K_{\lambda}(\xi,s_i)\right\|\left\|\tilde{x}_*(s)-\tilde{x}(s)\right\|ds.
\end{eqnarray}

Since $\Lambda=\left\{\Omega_1, \Omega_2, \ldots, \Omega_N\right\}$ is a $\Delta$-partition of $\Omega,$  $s_i\in \Omega_i$, then from Definition \ref{def2.1}  we obtain that $\left\|s-s_i\right\|\leq \Delta$ for every $s\in \Omega_i.$ Finally, by virtue of (\ref{eq3a}) we have that
\begin{eqnarray}\label{eq819}
\left\|K_{\lambda}(\xi,s)-K_{\lambda}(\xi,s_i)\right\| \leq  \omega_{\lambda} (\Delta)
\end{eqnarray}
for every $s\in \Omega_i.$ Thus, from (\ref{eq817*}) and (\ref{eq819}) it follows that
\begin{eqnarray}\label{eq820}
\displaystyle \left\| \int_{\Omega_i} K_{\lambda}(\xi,s)\left[\tilde{x}_*(s)-\tilde{x}(s)\right]ds \right\|   \leq \omega_{\lambda}(\Delta)
\int_{\Omega_i}\left\|\tilde{x}_*(s)-\tilde{x}(s)\right\|ds.
\end{eqnarray}  Since $\tilde{x}_*(\cdot)\in B_{p}(r)$ and $\tilde{x}(\cdot)\in B_{p}(r),$ then (\ref{eq820}) and H\"{o}lder's inequality yield that
\begin{eqnarray}\label{eq821}
&& \displaystyle \left\| \sum_{i=1}^{N} \int_{\Omega_i} K_{\lambda}(\xi,s)\left[\tilde{x}_*(s)-\tilde{x}(s)\right]ds \right\| \leq \omega_{\lambda} \left(\Delta \right) \sum_{i=1}^{N}
\int_{\Omega_i}\left\|\tilde{x}_*(s)-\tilde{x}(s)\right\|ds \nonumber \\ && \qquad = \omega_{\lambda} \left(\Delta \right)
\int_{\Omega}\left\|\tilde{x}_*(s)-\tilde{x}(s)\right\|ds  \leq 2 \omega_{\lambda}\left(\Delta \right) \left[\mu(\Omega)\right]^{\frac{1}{q}}r.
\end{eqnarray} (\ref{psi}), (\ref{eq817}) and (\ref{eq821}) imply that
\begin{eqnarray*}
\displaystyle \left\|\tilde{y}_*(\xi)-\tilde{y}(\xi)\right\| \leq 2\omega_{\lambda}\left(\Delta \right) \left[\mu(\Omega)\right]^{\frac{1}{q}}r
\end{eqnarray*}
for every $\xi \in \Omega$ and consequently
\begin{eqnarray*}\label{eq822}  \displaystyle
\left\| \tilde{y}_*(\cdot) -\tilde{y}(\cdot)\right\|_{q} \leq 2r \omega_{\lambda}(\Delta)\left[\mu (\Omega)\right]^{\frac{2}{q}} =  \psi_{\lambda} (\Delta) .
\end{eqnarray*}

Since $\tilde{y}(\cdot)\in \mathcal{F}_{p}^{\lambda, \gamma}(r)$ is arbitrarily chosen, the last inequality yields
\begin{eqnarray}\label{eq823}
\displaystyle \mathcal{F}_{p}^{\lambda,\gamma}(r) \subset \mathcal{F}_{p}^{\lambda, \gamma,\Delta}(r) +\psi_{\lambda} (\Delta) B_q(1).
\end{eqnarray}

From inclusion $\mathcal{F}_{p}^{\lambda, \gamma, \Delta}(r) \subset \mathcal{F}_{p}^{\lambda,\gamma}(r)$ and (\ref{eq823}) we obtain
\begin{eqnarray}\label{eq824}
h_{q} \left(\mathcal{F}_{p}^{\lambda,\gamma}(r),  \mathcal{F}_{p}^{\lambda, \gamma,\Delta}(r)\right) \leq  \psi_{\lambda} (\Delta).
\end{eqnarray}

\vspace{3mm}

\emph{Step 4.} For given $\Delta >0,$ $\delta>0,$ finite $\Delta$-partition $\Lambda= \left\{\Omega_1,\Omega_2,\ldots, \Omega_N \right\}$ of the compact set $\Omega\subset \mathbb{R}^k$ and uniform $\delta$-partition $\Lambda_* =\left\{0=z_0, z_1, \ldots , z_a=\gamma \right\}$ of the closed interval $[0,\gamma]$  we set
\begin{eqnarray*}
\displaystyle B_{p}^{\gamma, \Delta, \delta}(r) &=&\big\{ x(\cdot) \in B_{p}^{\gamma,\Delta}: x(\xi)=x_i \ \mbox{for every} \ \xi \in \Omega_i, \\ && \qquad \left\|x_i\right\| \in \Lambda_* \ \mbox{for every} \ i=1,2,\ldots , N \big\},
\end{eqnarray*}
\begin{eqnarray*}
\mathcal{F}_{p}^{\lambda, \gamma, \Delta, \delta}(r)= \left\{ F_{\lambda}(x(\cdot))|(\cdot): x(\cdot)\in B_{p}^{\gamma,\Delta,\delta}(r)\right\}.
\end{eqnarray*}

Let $\hat{y}_0(\cdot)\in \mathcal{F}_{p}^{\lambda,\gamma, \Delta}(r)$ be an arbitrary chosen function which is the image of  $\hat{x}_0(\cdot)\in B_{p}^{\gamma, \Delta}(r)$ under operator (\ref{fo*}). From inclusion $\hat{x}_0(\cdot)\in B_{p}^{\gamma, \Delta}(r)$ it follows that
\begin{eqnarray}\label{eq903}
\displaystyle \tilde{x}_0(\xi) =x_i, \ \xi \in \Omega_i,  \ i=1,2,\ldots , N,
\end{eqnarray} where
\begin{eqnarray}\label{eq904}
\displaystyle  \sum_{i=1}^{N} \mu(\Omega_i)\left\|x_i\right\|^p \leq r^p, \ \left\|x_i\right\| \leq \gamma \ \mbox{for every}  \ i=1,2,\ldots , N
\end{eqnarray}

Since $\left\|x_i\right\| \in [0,\gamma]$ for every $i=1,2,\ldots , N$, then $\left\|x_i\right\| <\gamma$ or $\left\|x_i\right\| =\gamma.$
If $\left\|x_i\right\| < \gamma $, then there exists $z_{j_i} \in \Lambda_*$ such that
\begin{eqnarray}\label{eq905}
\left\|x_i\right\| \in \left[ z_{j_i},z_{j_i +1}\right).
\end{eqnarray}

Define new  function $\hat{x}_*(\cdot):\Omega\rightarrow \mathbb{R}^n$,  setting
\begin{eqnarray}\label{eq906}
\hat{x}_*(\xi)=\left\{
\begin{array}{lllll} \displaystyle
 \frac{x_i}{\left\|x_i\right\|}z_{j_i} \  , &  \ \mbox{if} \ &  0<\left\|x_i\right\| <\gamma ,\\
 x_i \  , & \ \mbox{if} \ &  \left\|x_i\right\|=0 \ \mbox{or} \  \left\|x_i\right\|=\gamma
\end{array}
\right.
\end{eqnarray}
where $\xi \in \Omega_i$, $i=1,2,\ldots, N$ and $z_{j_i}\in \Lambda_*$ is defined by (\ref{eq905}). It is not difficult to observe that $\left\|\hat{x}_*(\xi)\right\| \leq \left\|\hat{x}_0(\xi)\right\|$ for every $\xi \in \Omega$, and moreover, from (\ref{eq903}), (\ref{eq904}), (\ref{eq905}) and (\ref{eq906}) it follows that $\hat{x}_*(\cdot) \in B_{p}^{\gamma, \Delta, \delta}(r)$ and
\begin{eqnarray}\label{eq907}
\left\|\hat{x}_0(\xi)-\hat{x}_*(\xi)\right\| \leq  \delta
\end{eqnarray} for every $\xi \in \Omega.$ Now, let $\hat{y}_*(\cdot)\in \mathcal{F}_{p}^{\lambda,\gamma, \Delta, \delta}(r)$ be the image of $\hat{x}_*(\cdot) \in B_{p}^{\gamma, \Delta, \delta}(r)$ under operator (\ref{fo*}). Thus, (\ref{eq2a}) and (\ref{eq907})  imply that
\begin{eqnarray*}  \displaystyle
\left\| \hat{y}_*(\xi) -\hat{y}_0(\xi)\right\| \leq M(\lambda)\mu (\Omega)\delta
\end{eqnarray*} for every $\xi \in \Omega$. From the last inequality and  (\ref{phi}) we conclude that
\begin{eqnarray}\label{eq909}  \displaystyle
\left\| \hat{y}_*(\cdot) -\hat{y}_0(\cdot)\right\|_{q} \leq M(\lambda)\left[\mu (\Omega)\right]^{1+\frac{1}{q}}\delta =\varphi_{\lambda}(\delta).
\end{eqnarray}

Since $\hat{y}_0(\cdot)\in \mathcal{F}_{p}^{\lambda,\gamma, \Delta}(r)$ is arbitrarily chosen and $\hat{y}_*(\cdot)\in \mathcal{F}_{p}^{\lambda,\gamma, \Delta, \delta}(r)$, the inequality (\ref{eq909}) yields that
\begin{eqnarray}\label{eq910}  \displaystyle
\mathcal{F}_{p}^{\lambda,\gamma, \Delta}(r) \subset \mathcal{F}_{p}^{\lambda,\gamma, \Delta,\delta}(r)+ \varphi_{\lambda}(\delta)B_q(1).
\end{eqnarray}

The inclusion $\mathcal{F}_{p}^{\lambda,\gamma, \Delta, \delta}(r) \subset \mathcal{F}_{p}^{\lambda,\gamma, \Delta}(r)$ and (\ref{eq910}) imply
\begin{eqnarray}\label{eq911}  \displaystyle
h_{q}\left(\mathcal{F}_{p}^{\lambda,\gamma, \Delta}(r), \mathcal{F}_{p}^{\lambda,\gamma, \Delta,\delta}(r)\right) \leq \varphi_{\lambda}(\delta).
\end{eqnarray}

\vspace{3mm}

\emph{Step 5.} For given $\Delta >0,$ $\delta>0$,  finite $\Delta$-partition $\Lambda= \left\{\Omega_1,\Omega_2,\ldots, \Omega_N \right\}$ of the compact set $\Omega\subset \mathbb{R}^k$, uniform $\delta$-partition $\Lambda_* =\left\{0=z_0, z_1, \ldots , z_a=\gamma \right\}$ of the closed interval $[0,\gamma]$  and finite $\sigma$-net $E_{\sigma}=\left\{e_1,e_2, \ldots, e_c\right\}$ of $E=\left\{x\in \mathbb{R}^n: \left\|x\right\| =1\right\}$, the set $B_{p}^{\gamma, \Delta, \delta, \sigma}(r)$ is defined by (\ref{fin}). Denote
\begin{eqnarray*}
\mathcal{F}_{p}^{\lambda, \gamma, \Delta, \delta, \sigma}(r)= \left\{ F_{\lambda}(x(\cdot))|(\cdot): x(\cdot)\in B_{p}^{\gamma, \Delta, \delta, \sigma}(r)\right\}.
\end{eqnarray*}
Choose an arbitrary $\overline{y}_0(\cdot)\in \mathcal{F}_{p}^{\lambda,\gamma, \Delta,\delta}(r)$ which is the image of $\overline{x}_0(\cdot)\in B_{p}^{\gamma, \Delta,\delta}(r)$ under operator (\ref{fo*}). From inclusion $\overline{x}_0(\cdot)\in B_{p}^{\gamma, \Delta,\delta}(r)$ it follows that there exists $z_{j_i} \in \Lambda_*,$ $s_i \in E$ $(i=1,2,\ldots, N)$ such that
\begin{eqnarray}\label{eq1003}
\displaystyle \overline{x}_0(\xi) =z_{j_i}s_i, \ \xi \in \Omega_i,  \ i=1,2,\ldots , N
\end{eqnarray} where
\begin{eqnarray}\label{eq1004}
\displaystyle  \sum_{i=1}^{N} \mu(\Omega_i) z_{j_i}^p \leq r^p .
\end{eqnarray}

Since $s_i \in E$ for every $i=1,2,\ldots, N$, $E_{\sigma}$ is a finite $\sigma$-net on $E$, then for each $s_i \in E$ there exists $e_{l_i} \in E_{\sigma}$ such that
\begin{eqnarray}\label{eq1005}
\left\|s_i -e_{l_i} \right\| \leq \sigma.
\end{eqnarray}
Define function $\overline{x}_0(\cdot):\Omega \rightarrow \mathbb{R}^n$
setting
\begin{eqnarray}\label{eq1006*}
\displaystyle \overline{x}_*(\xi) =z_{j_i}e_{l_i}, \ \xi \in \Omega_i,  \ i=1,2,\ldots , N.
\end{eqnarray}
From  (\ref{eq1003}), (\ref{eq1004}), (\ref{eq1005}) and (\ref{eq1006*}) it follows that  $\overline{x}_*(\cdot)\in B_{p}^{\gamma, \Delta, \delta, \sigma}(r)$ and
\begin{eqnarray}\label{eq1006}
\displaystyle \left\| \overline{x}_*(\xi)- \overline{x}_0(\xi)\right\|  \leq z_{j_i} \left\|s_i-e_{l_i}\right\| \leq \gamma \sigma
\end{eqnarray} for every $\xi \in \Omega.$ Not let $\overline{y}_*(\cdot)\in \mathcal{F}_{p}^{\lambda,\gamma, \Delta,\delta,\sigma}(r)$ be the image of  $\overline{x}_*(\cdot)\in B_{p}^{\gamma, \Delta, \delta, \sigma}(r)$ under operator (\ref{fo*}). Then, (\ref{eq1006}) yields
\begin{eqnarray*}
\left\| \overline{y}_*(\xi)-\overline{y}_0(\xi)\right\| \leq M(\lambda)\mu(\Omega)  \gamma \sigma
\end{eqnarray*} for every $\xi \in \Omega$ and hence
\begin{eqnarray}\label{eq1008}
\left\| \overline{y}_*(\cdot)-\overline{y}_0(\cdot)\right\|_{q} \leq M(\lambda)\left[\mu(\Omega)\right]^{1+\frac{1}{q}}  \gamma \sigma =\alpha_{\lambda}(\gamma, \sigma),
\end{eqnarray} where $M(\lambda)$ is defined by (\ref{eq2a}), $\alpha_{\lambda}(\gamma, \sigma)$ is defined by (\ref{alfa}). Thus, for arbitrary chosen $\overline{y}_0(\cdot)\in \mathcal{F}_{p}^{\lambda,\gamma, \Delta,\delta}(r)$ there exists $\overline{y}_*(\cdot)\in \mathcal{F}_{p}^{\lambda,\gamma, \Delta,\delta,\sigma}(r)$ such that the inequality (\ref{eq1008}) is satisfied. This means that
\begin{eqnarray*}
\mathcal{F}_{p}^{\lambda,\gamma, \Delta,\delta}(r) \subset \mathcal{F}_{p}^{\lambda,\gamma, \Delta,\delta,\sigma}(r) +\alpha_{\lambda}(\gamma, \sigma) B_q(1).
\end{eqnarray*}

The last inclusion and inclusion $\mathcal{F}_{p}^{\lambda,\gamma, \Delta,\delta,\sigma}(r)\subset \mathcal{F}_{p}^{\lambda,\gamma, \Delta,\delta}(r)$
imply that
\begin{eqnarray}\label{eq1009}
h_{q}\left(\mathcal{F}_{p}^{\lambda,\gamma, \Delta,\delta,\sigma}(r), \mathcal{F}_{p}^{\lambda,\gamma, \Delta,\delta}(r)\right) \leq \alpha_{\lambda}(\gamma, \sigma).
\end{eqnarray}

\vspace{3mm}

\emph{Step 6.} Analogously to (\ref{eq1t}) it is not difficult to show that

\begin{eqnarray}\label{eq1101}
h_{q}\left(\mathcal{F}_{p}^{\lambda,\gamma, \Delta,\delta,\sigma}(r), \mathcal{F}_{p}^{\gamma, \Delta,\delta,\sigma}(r)\right) \leq \frac{\lambda}{2}
\end{eqnarray} where the set $\mathcal{F}_{p}^{\gamma, \Delta,\delta,\sigma}(r)$ is defined by (\ref{eq801}).

\vspace{3mm}

\emph{Step 7.} Now, the proof of the theorem follows from inequalities (\ref{eq1t}), (\ref{eq810}), (\ref{eq824}), (\ref{eq911}), (\ref{eq1009}) and (\ref{eq1101}).
\end{proof}

From Theorem \ref{teo1} it follows the validity of the following Corollary.
\begin{corollary} \label{cor1} For every $\varepsilon >0$ there exists $\lambda(\varepsilon) >0,$ $\gamma_*(\varepsilon)=\gamma (\varepsilon,\lambda(\varepsilon))>0,$ $\Delta_*(\varepsilon)=\Delta (\varepsilon,\lambda(\varepsilon))>0,$ $\delta_*(\varepsilon)=\delta (\varepsilon,\lambda(\varepsilon))>0$ and $\sigma_*(\varepsilon)=\sigma (\varepsilon,\lambda(\varepsilon),\gamma_*(\varepsilon))>0$ such that for every $\Delta $-partition of the compact set $\Omega\subset \mathbb{R}^k,$ $\delta$-partition of the closed interval $[0,\gamma]$ and $\sigma >0$ the inequality
\begin{eqnarray*}
h_{q}\left(\mathcal{F}_{p}(r),\mathcal{F}_{p}^{\gamma_*(\varepsilon), \Delta, \delta, \sigma}(r)\right) \leq \varepsilon
\end{eqnarray*}
is satisfied for each $\Delta \in (0,\Delta_* (\varepsilon)),$ $\delta \in (0, \delta_* (\varepsilon))$ and
$\sigma \in (0,\sigma_* (\varepsilon)).$
\end{corollary}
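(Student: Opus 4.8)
The plan is to invoke the master estimate of Theorem \ref{teo1} and then to select the five free parameters one at a time, in an order dictated by how each term depends on the others, so that every summand is at most $\varepsilon/5$. Since the right-hand side of Theorem \ref{teo1} is the sum of the five quantities $\lambda$, $c_*M(\lambda)/\gamma^{p-1}$, $\psi_{\lambda}(\Delta)$, $\varphi_{\lambda}(\delta)$, $\alpha_{\lambda}(\gamma,\sigma)$, bounding each by $\varepsilon/5$ will yield the claim.

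First I would fix $\lambda$. Setting $\lambda(\varepsilon)=\varepsilon/5$ immediately disposes of the first term. Crucially, this choice pins down the approximating kernel $K_{\lambda}$ and hence the constant $M(\lambda)$ from (\ref{eq2a}) and the modulus $\omega_{\lambda}(\cdot)$ from (\ref{eq3a}); thereafter $c_*$, $\psi_{\lambda}$, $\varphi_{\lambda}$, $\alpha_{\lambda}$ are all determined except for their dependence on the still-free $\gamma,\Delta,\delta,\sigma$. This is why $\gamma_*,\Delta_*,\delta_*,\sigma_*$ are allowed to depend on $\lambda(\varepsilon)$ in the statement.

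Next I would control the $\gamma$-term. Because $p>1$ we have $p-1>0$, so $\gamma^{p-1}\to+\infty$ as $\gamma\to+\infty$; since $c_*M(\lambda)$ is now a fixed finite constant, there is a threshold $\gamma_*(\varepsilon)=\gamma(\varepsilon,\lambda(\varepsilon))$ with $c_*M(\lambda)/\gamma_*^{\,p-1}\le\varepsilon/5$, and I freeze $\gamma=\gamma_*(\varepsilon)$. For the partition parameters I would use the two properties recorded in Section 2: $\omega_{\lambda}(\Delta)\to0^+$ as $\Delta\to0^+$ and $\omega_{\lambda}$ is nondecreasing, so from (\ref{psi}) there is $\Delta_*(\varepsilon)$ with $\psi_{\lambda}(\Delta)\le\varepsilon/5$ for \emph{all} $\Delta\in(0,\Delta_*(\varepsilon))$, monotonicity ensuring the bound persists on the whole interval. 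Likewise (\ref{phi}) is linear in $\delta$, so taking $\delta_*(\varepsilon)=\dfrac{\varepsilon/5}{M(\lambda)[\mu(\Omega)]^{1+1/q}}$ gives $\varphi_{\lambda}(\delta)\le\varepsilon/5$ for every $\delta\in(0,\delta_*(\varepsilon))$.

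Finally, only after $\gamma_*(\varepsilon)$ has been fixed can the last term be handled, since $\alpha_{\lambda}$ in (\ref{alfa}) carries the product $\gamma\sigma$; this is exactly why $\sigma_*$ must be permitted to depend on $\gamma_*(\varepsilon)$. With $\gamma=\gamma_*(\varepsilon)$ frozen, $\alpha_{\lambda}(\gamma_*,\sigma)$ is linear in $\sigma$, so I set $\sigma_*(\varepsilon)=\sigma(\varepsilon,\lambda(\varepsilon),\gamma_*(\varepsilon))=\dfrac{\varepsilon/5}{M(\lambda)[\mu(\Omega)]^{1+1/q}\gamma_*(\varepsilon)}$, giving $\alpha_{\lambda}(\gamma_*,\sigma)\le\varepsilon/5$ for $\sigma\in(0,\sigma_*(\varepsilon))$. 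Summing the five summands in Theorem \ref{teo1} then yields $h_{q}\le 5\cdot(\varepsilon/5)=\varepsilon$. There is no genuine analytic obstacle here; the one thing to get right is the \emph{order of quantification}, matching the nested dependence $\lambda\rightarrow(\gamma_*,\Delta_*,\delta_*)\rightarrow\sigma_*$ asserted in the statement, together with the remark that monotonicity (for $\Delta$) and linearity (for $\delta$ and $\sigma$) make each bound hold throughout the open interval $(0,\cdot)$ rather than merely at the threshold value.
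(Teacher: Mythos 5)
Your proposal is correct and follows essentially the same route as the paper: the same choice $\lambda(\varepsilon)=\varepsilon/5$, the same (up to trivial rewriting) explicit formulas for $\gamma_*(\varepsilon)$, $\delta_*(\varepsilon)$ and $\sigma_*(\varepsilon)$, and the same appeal to $\omega_{\lambda}(\Delta)\rightarrow 0^{+}$ together with monotonicity to select $\Delta_*(\varepsilon)$, after which Theorem \ref{teo1} gives the bound $5\cdot(\varepsilon/5)=\varepsilon$. Your added remarks on the order of quantification and on why each bound persists on the whole interval $(0,\cdot)$ are consistent with, and slightly more explicit than, the paper's argument.
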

\begin{proof} Let us choose $\displaystyle \lambda(\varepsilon) =\frac{\varepsilon}{5}$ and fix it, and let
\begin{eqnarray*}
\gamma_*(\varepsilon)=\gamma(\varepsilon,\lambda(\varepsilon))=\left(\frac{5c_* M(\lambda(\varepsilon))}{\varepsilon}\right)^{\frac{1}{p-1}},
\end{eqnarray*}
$\Delta_*(\varepsilon)=\Delta(\varepsilon, \lambda(\varepsilon))>0$ be such that
\begin{eqnarray*}
\psi_{\lambda(\varepsilon)}(\Delta) \leq \frac{\varepsilon}{5}
\end{eqnarray*} for every $\Delta \in (0,\Delta_*(\varepsilon)),$
\begin{eqnarray*}
\delta_*(\varepsilon)=\delta(\varepsilon,\lambda(\varepsilon)) = \frac{\varepsilon}{5M(\lambda(\varepsilon))\left[\mu(\Omega)\right]^{1+\frac{1}{q}}},
\end{eqnarray*}
\begin{eqnarray*}
\sigma_*(\varepsilon)=\sigma(\varepsilon,\lambda(\varepsilon),\gamma_*(\varepsilon))= \frac{\varepsilon}{5M(\lambda(\varepsilon))\left[\mu (\Omega)\right]^{1+\frac{1}{q}} \gamma_*(\varepsilon)}.
\end{eqnarray*}

Now the proof of the corollary follows from  Theorem \ref{teo1}.
\end{proof}

\end{document}